\newtheorem{thm}{Theorem}
\newtheorem{lem}{Lemma}
\newtheorem{prop}{Proposition}
\newtheorem{rem}{Remark}
\begin{document}

\title[Linear non-degeneracy of the blow-up  in  phase separation of BEC{s}]{Linear non-degeneracy of the 1-D blow-up limit in the  phase segregation of  Bose-Einstein condensates}

\author{ Christos Sourdis}

\address{Institute of Applied and Computational Mathematics, Foundation of Research and Technology of Hellas, Herakleion, Crete,
 Greece}

\email{sourdis@uoc.gr}

\begin{abstract}
We show that  the kernel of the linearization of the blow-up problem at the regular part of the interface that separates segregated BECs is one-dimensional, generated by translations in the normal direction to the interface. This useful non-degeneracy property was previously known only in one and two dimensions.
\end{abstract}

\maketitle

\section*{Introduction and main result}
In recent years, starting from the very important papers \cite{berest1, norisCPAM}, considerable attention has been paid to the study of entire solutions to the following elliptic system:
\begin{equation}\label{eqEqBU}
\left\{\begin{array}{c}
  -\Delta u+v^2u=0, \\

  -\Delta v+u^2v=0,
\end{array}\right.\ \ u,v>0 \ \ \textrm{in}\ \ \mathbb{R}^N,\ N\geq 1,
\end{equation}
  and to its generalization in the case of arbitrary many components (see for example \cite{berest2,farina,ST,SZentire,wangDG}). As is pointed out in \cite{berest1} this system arises in various fields, even in the study of black holes. However, the interest in the aforementioned references stems from the study of mixtures of repulsive Bose-Einstein condensates in the strong separation limit. The mixture is described by a solution of Gross-Pitaevskii system with strong coupling, and the above system arises in the blow-up limit near the interface \cite{tavaresCVPDE} which separates the segregated components.
More precisely,     essentially  only two components of the strongly coupled Gross-Pitaevskii system are nonzero in the vicinity of the regular part of the interface; to main order their interaction is governed by  a solution of (\ref{eqEqBU}) with linear growth (see \cite{SZ,wang}). It was shown in  \cite{wangDG} that such solutions depend only on one variable (corresponding to the direction orthogonal to the interface). We also refer to \cite{berest1} for an earlier related result in two dimensions  along the lines of the proof of the De Giorgi conjecture for the Allen-Cahn equation in low dimensions.

It was shown in \cite{berest1,berest2} that the ODE version of (\ref{eqEqBU}), that is
\begin{equation}\label{eqEqBUode}
\left\{\begin{array}{c}
  - u''+v^2u=0, \\

  -  v''+u^2v=0,
\end{array}\right.\ \ u,v>0 \ \ \textrm{in}\ \ \mathbb{R},
\end{equation}
  admits precisely a two-parameter family of solutions:
\[
\left(\mu U(\mu x+\xi),\ \mu V(\mu x+\xi)\right),
\]
with scaling parameter $\mu>0$ and translation $\xi \in(-\infty,+\infty)$; for some fixed solution pair $(U,V)$ which  satisfies
the mirror reflection symmetry \begin{equation}\label{eqRefUV}U(-x)\equiv V(x),\end{equation} and has the following asymptotic behaviour at respective infinities:
\begin{equation}\label{eqUder}
U(x)\to 0 \ \textrm{as}\ x\to -\infty;\ \ U'(x)\to 1  \ \textrm{as}\ x\to +\infty.
\end{equation}
We note that the convergence in the previous limits is super-exponentially fast. We also point out that $U$ is strictly increasing and convex. Actually, it was observed in \cite{aftalionSourdis}
that there is a constant asymptotic phase in the asymptotic behaviour of $U$ at $+\infty$. In passing, we would like to mention that a different but rather indirect proof of the uniqueness of $(U,V)$ can be given by combining the results in the latter reference.

Of importance is also the associated linearization of (\ref{eqEqBUode}) about the aforementioned solution $(U,V)$, namely the linear problem
\begin{equation}\label{eqlin1d}
\left\{\begin{array}{c}
  - \phi''+V^2\phi+2UV\psi=0 \\

  - \psi''+U^2\psi+2UV\phi=0
\end{array}\right.\ \ \textrm{in}\ \ \mathbb{R}.
\end{equation}
In particular, the higher order terms in a blow-up analysis of the strong separation limit near the regular part of the interface should be given by linear inhomogeneous problems involving this linearized operator (see \cite{casteras} for the radial case).
It was shown in \cite{berest1}  that the solution $(U,V)$ is linearly non-degenerate, in the sense that the only bounded solutions of the above problem are constant multiples of $(U',V')$ (the element of the kernel coming from the translation invariance of (\ref{eqEqBUode})). Based on this, a solvability
theory for the corresponding inhomogeneous problem was developed in \cite{aftalionSourdis}.

Recalling the discussion leading to (\ref{eqEqBUode}), it is expected that arriving to (\ref{eqlin1d}) rigorously in the strong separation limit should
require showing that bounded solutions of the linearization of the PDE system (\ref{eqEqBU}) about the one-dimensional solution $(U,V)$, namely
\begin{equation}\label{eqLin-}
\left\{\begin{array}{c}
  -\Delta \phi+V^2(x)\phi+2UV(x)\psi=0, \\

  -\Delta \psi+U^2(x)\psi+2UV(x)\phi=0,
\end{array}\right.\ \  (x,y)\in (\mathbb{R},\mathbb{R}^{N-1}),
\end{equation}
depend only on $x$, and thus are constant multiples of $(U',V')$ by the aforementioned result of \cite{berest1}. Conversely, as in related elliptic problems that give rise to interfaces such as the Allen-Cahn or NLS equations (see for example \cite{delannal,delToda,pacard}),  knowledge of this property should be crucial in establishing the persistence of formally constructed approximate solutions for the blow-down Gross-Pitaevskii problem. In fact, this property represents the linearized non-degeneracy of the blow-up profile $(U,V)$ with respect to (\ref{eqEqBU}).
However, it is not clear to us how to adapt the  analogous proofs in the aforementioned references, such as distribution theory and energy methods, for this purpose. Loosely speaking,   in the aforementioned references the outer problem (for $|x|\gg 1$) is $-\Delta w+w=0$, which provides exponential decay of bounded solutions. We stress that this exponential decay property is essential for these methods to apply successfully. In contrast, here the outer problem, say for $x\gg 1$, is  $(-\Delta w,-\Delta z+x^2z)=(0,0)$. It is not even clear  if a bounded  component $w$ should have a decay rate as $x\to +\infty$. Nevertheless,   the linear non-degeneracy of $(U,V)$ when $N=2$ was established in \cite{berest1}, as a consequence of a more general result, in the spirit of the proof of De Giorgi's conjecture for the Allen-Cahn equation in low dimensions. More precisely, the authors considered the system for $\left(\frac{\phi}{U'},\frac{\psi}{V'} \right)$ and derived energy estimates over large balls.

In this paper we will establish this property in any dimension. In other words, we will prove  the following.

\begin{thm}\label{thm}
If  $P,Q \in C^2(\mathbb{R}^N)\cap L^\infty(\mathbb{R}^N)$ solve (\ref{eqLin-}), then they depend only on $x$ and
\[
(P,Q)=a\left(U'(x),V'(x)\right),\ \ (x,y)\in (\mathbb{R},\mathbb{R}^{N-1})
\]
for some constant $a\in \mathbb{R}$.
\end{thm}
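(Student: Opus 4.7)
Since the coefficients of (\ref{eqLin-}) depend only on $x$, the system is translation-invariant in $y \in \mathbb{R}^{N-1}$, and my plan is to decompose a bounded solution $(P, Q)$ according to its $y$-frequencies and show that only the zero-frequency mode survives. Two 1D preliminaries are needed. First, a barrier argument exploiting $V^2(x) \sim x^2$ as $x \to -\infty$ and the super-exponential smallness of $UV$ there yields, for any bounded solution of (\ref{eqLin-}), the pointwise estimate $|P(x, y)| \leq C e^{-\delta x^2}$ uniformly in $y$ for $x \leq -R$; symmetrically, $|Q(x, y)| \leq C e^{-\delta x^2}$ for $x \geq R$. The same estimate applies to bounded solutions of the $\lambda$-shifted 1D system below, since $\lambda \geq 0$ only enlarges the effective potential. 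Second, I claim the following strengthening of the result of \cite{berest1}: for every $\lambda > 0$, the ODE system
\begin{equation*}
-\Phi''(x) + M_0(x)\Phi(x) + \lambda \Phi(x) = 0, \qquad M_0(x) := \begin{pmatrix} V^2 & 2UV \\ 2UV & U^2 \end{pmatrix},
\end{equation*}
admits no nontrivial bounded solution $\Phi : \mathbb{R} \to \mathbb{R}^2$.

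To prove this strengthening, observe that the substitution $\psi \mapsto -\psi$ converts the kernel direction $(U', V')$ into the componentwise positive pair $(U', -V')$ and replaces $M_0$ by the matrix $\tilde M_0$ with $-2UV$ on the off-diagonal. Writing $\tilde\phi = U'\mu$ and $\tilde\psi = -V'\nu$ and using the identities $U''' = V^2 U' + 2UV V'$ and $V''' = U^2 V' + 2UV U'$ obtained by differentiating (\ref{eqEqBUode}), one recasts the quadratic form of $-\partial_x^2 + \tilde M_0$ as
\begin{equation*}
\int_{\mathbb{R}} \left[(U')^2 (\mu')^2 + (V')^2 (\nu')^2 + 2UV U'(-V')(\mu - \nu)^2 \right] dx,
\end{equation*}
which is manifestly non-negative. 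By the first preliminary, any bounded solution of the shifted system lies in $L^2$ with super-exponential decay, so integration by parts is justified; pairing the equation with $\Phi$ (after the sign flip) and adding the term $\lambda \|\Phi\|_{L^2}^2$ yields $0 \geq \lambda \|\Phi\|_{L^2}^2$, hence $\Phi \equiv 0$.

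The final step is the Fourier decomposition in $y$. For any cutoff $\chi \in C_c^\infty(\mathbb{R}^{N-1} \setminus \{0\})$, the convolution $(P, Q) *_y \chi^\vee$ is a bounded smooth solution of (\ref{eqLin-}) whose $y$-Fourier transform is supported in $\mathrm{supp}(\chi)$. A spectral-projection argument with respect to $-\Delta_y$, applied mode-by-mode at $\lambda = |\eta|^2 > 0$ and combined with the 1D non-degeneracy above, forces the convolution to vanish. Letting $\chi$ range over all such cutoffs shows that the (tempered-distributional) $y$-Fourier transforms of $P$ and $Q$ are supported at $\eta = 0$, so $P$ and $Q$ are polynomial in $y$; boundedness reduces them to constants in $y$, and \cite{berest1} applied to the resulting 1D bounded solution yields $(P, Q) = a(U', V')$.

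The principal obstacle is making this Fourier step fully rigorous. Since $P$ and $Q$ are only in $L^\infty$ in $y$, their $y$-Fourier transforms are merely tempered distributions, so the mode-by-mode reduction requires smoothing. A promising route is to test the PDE against products $\phi(x) k_t(y - y_0)$, where $k_t$ is the heat kernel of $-\Delta_y$ on $\mathbb{R}^{N-1}$, and to exploit that the contribution of each mode with $|\eta|^2 > 0$ is controlled by the coercivity $\lambda \|\Phi\|_{L^2}^2$ established above; letting $t \to \infty$ (or using a direct spectral resolution) then isolates the $\eta = 0$ contribution.
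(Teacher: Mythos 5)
Your 1D coercivity lemma is correct and is a genuine strengthening of the non-degeneracy result of \cite{berest1}: after the sign flip the substitution $\tilde\phi=U'\mu$, $\tilde\psi=-V'\nu$ does recast the quadratic form of $-\partial_x^2+\tilde M_0$ as $\int\left[(U')^2(\mu')^2+(V')^2(\nu')^2+2UVU'(-V')(\mu-\nu)^2\right]dx\geq 0$ (I checked the identity; note that $\mu=\tilde\phi/U'$ blows up as $x\to-\infty$ because $U'$ decays super-exponentially while $\tilde\phi$ decays only like a Gaussian, so the vanishing of the boundary terms $U''\tilde\phi^2/U'$ and $V''\tilde\psi^2/V'$ needs the WKB asymptotics $U'/U\sim V$, $V'/V\sim -U$ and should be spelled out; also, for the $\lambda$-shifted system the first component decays at $+\infty$ only like $e^{-\sqrt{\lambda}\,x}$, not super-exponentially, which is still enough). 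The problem is the step you yourself flag as the ``principal obstacle'': the reduction from the PDE with data merely in $L^\infty_y$ to the individual modes $\lambda=|\eta|^2$. After convolving with $\chi^\vee$ you obtain a bounded solution $w$ whose $y$-spectrum lies in a compact set $K$ with $0\notin K$, but $w$ is a continuum superposition of modes and is not integrable in $y$, so you cannot pair the equation with $w$ over $\mathbb{R}^{N-1}$ and invoke the coercivity $\lambda\|\Phi\|_{L^2}^2$; what you actually need is injectivity of $-\Delta+M_0$ on the class of \emph{bounded} band-limited functions, and neither the heat-kernel smoothing nor the ``spectral projection'' you sketch produces that. This is not a technicality: the inability to integrate the energy identity over all of $y$-space is exactly the obstruction that confined the argument of \cite{berest1} (energy estimates for $(\phi/U',\psi/V')$ over large balls) to $N=2$, and your Fourier reformulation does not remove it.

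For comparison, the paper's proof takes a different route precisely to avoid this issue. It differentiates the system in $y_i$, so that the resulting solution $(\partial_{y_i}P,-\partial_{y_i}Q)$ of the cooperative system $\mathcal{M}=0$ tends to $0$ as $|x|\to\infty$ uniformly in $y$ (Lemma \ref{lemUnif}, via a Liouville theorem for the blown-down harmonic limit), and then runs Serrin's sweeping principle against the positive solution $(U',-V')$: the maximum principle holds in the strip $|x|\leq L$ because $(U',-V')$ is bounded below there, and in the half-spaces $\pm x>L$ because of the diverging supersolution $Z$ of Lemma \ref{lemZ}; the lack of any decay in $y$ is handled not by integration but by translating in $y$ along a non-contact sequence and extracting a limit, to which the strong maximum principle applies. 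If you want to salvage your approach, the missing ingredient is a Liouville-type statement for bounded band-limited solutions of $-\Delta w+M_0w=0$ with spectrum in $K\not\ni 0$ (for instance via uniform $L^\infty\to L^\infty$ resolvent bounds for $L_\eta=-\partial_x^2+M_0+|\eta|^2$, $\eta\in K$, combined with an almost-periodic or mean-value argument); as written, that statement is assumed rather than proved.
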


Our approach is based on using the maximum principle in unbounded domains, in the spirit of \cite{BCN}, for showing that the solutions $(\partial_{y_i}P,\partial_{y_i}Q)$, $i=1,\cdots,N-1$, of (\ref{eqLin-}) are identically zero. We point out that these solutions converge to zero as $|x|\to \infty$, uniformly in $y\in \mathbb{R}^{N-1}$ (see Lemma \ref{lemUnif}).
The study of rigidity and symmetry properties of solutions to elliptic problems with uniform limits has received considerable attention in recent years, with the maximum principle also playing a central role (see for example \cite{bonheure,farMalchR,farinaSciunzi} and the references therein).
On the other hand, there are  some significant differences with respect to the
previous references. More precisely, the main novelty of our work lies on how to
establish the maximum principle at $x$-infinity.

Let us briefly outline the main steps in the proof of Theorem \ref{thm}.
As in \cite{berest2}, the maximum principle will be applied to the system satisfied by $(\phi,-\psi)$ which is cooperative, where $(\phi,\psi)$ satisfies (\ref{eqLin-}). More precisely:
  $(\phi,\psi)$ satisfies (\ref{eqLin-}) iff $(\phi,-\psi)$ satisfies
  \begin{equation}\label{eqM}
\mathcal{M}\left(
\begin{array}{c}
  \phi \\
  \psi
\end{array}
\right):=
\left(\begin{array}{c}
  -\Delta \phi+V^2\phi-2UV\psi  \\

  -\Delta \psi+U^2\psi-2UV\phi
\end{array}\right) =
\left(\begin{array}{c}
  0  \\

  0
\end{array}\right).
\end{equation}
We will exploit the fact that $(U',-V')$ is a positive solution of $\mathcal{M}=0$.
However, this solution degenerates as $|x|\to \infty$, in the sense that one of its components approaches zero.
We will deal with this degeneracy by splitting $\mathbb{R}^N$ in a large strip $\left\{(x,y)\ :\ |x|\leq L,\ y\in \mathbb{R}^{N-1} \right\}$
  and in the two distant half-spaces with $x\leq -L$ and $x\geq L$, respectively.
In the strip   it holds $(U',-V')\geq (c,c)$ for some $c>0$, which guarantees that $\mathcal{M}$ satisfies the maximum principle there (see also a related discussion in \cite{cabre3}).
On the other hand,  we will show that $\mathcal{M}$ satisfies the maximum in the two distant half-spaces by constructing a   positive super-solution, depending only on $x$, which
 diverges as $x\to \pm \infty$. We point out that to control the difference of the super-solutions with a bounded solution of $\mathcal{M}=0$ as $|y|\to \infty$, we will exploit again   the translation invariance of $\mathcal{M}$ in $y$. To implement the above, we will adopt the viewpoint of \cite{sourdisSerrin} and employ Serrin's sweeping principle \cite{serrin}.

    The rest of the paper is structured as follows: In Section \ref{secmax} we will prove that $\mathcal{M}$ satisfies the maximum principle in the two distant half-spaces. Then, in Section \ref{secmain} we will prove our main result Theorem \ref{thm}.

\section{Maximum principle at infinity}\label{secmax}
In this section we will prove the following.

\begin{prop}\label{proMaxInf}
There exists a large $L>0$ such that the following property holds.
If $\phi_\pm,\ \psi_{\pm}\in C^2\left(\bar{T}_L^\pm\right)\cap L^\infty(T_L^\pm)$   satisfy
\begin{equation}\label{eqMBVP}
\mathcal{M}\left(
\begin{array}{c}
  \phi_\pm \\
  \psi_\pm
\end{array}
\right) =
\left(\begin{array}{c}
  0  \\

  0
\end{array}\right),
\end{equation}
where $T_L^\pm=\left\{(x,y)\ :\ \pm x >L,\ y\in \mathbb{R}^{N-1} \right\}$ and $\mathcal{M}$ is as in (\ref{eqM});
\begin{equation}\label{eqNega}
\phi_\pm,\ \psi_\pm < 0 \ \ \textrm{on}\ \ x=\pm L,
\end{equation}
then
\[
\phi_\pm,\ \psi_\pm < 0 \ \ \textrm{in}\ \ T_L^\pm.
\]
\end{prop}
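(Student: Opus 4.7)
I focus on $T_L^+$; the case $T_L^-$ is analogous via the $U\leftrightarrow V$ symmetry (\ref{eqRefUV}). By linearity of $\mathcal{M}$ I may replace $(\phi_+,\psi_+)$ by $(\bar\phi,\bar\psi):=(-\phi_+,-\psi_+)$, which still solves $\mathcal{M}=0$, lies in $L^\infty$, and is strictly positive on $\{x=L\}$. The goal then becomes to show $\bar\phi,\bar\psi>0$ throughout $T_L^+$. The proof rests on two ingredients: (i) a positive super-solution $(\Phi,\Psi)$ of $\mathcal{M}$ depending only on $x$ and diverging at $+\infty$, and (ii) Serrin's sweeping principle, with the $y$-translation invariance of $\mathcal{M}$ called upon to locate the touching point.

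\textbf{Super-solution.} For $x\ge L$ large, $U(x)\sim x$ and $V$ decays super-exponentially (combining (\ref{eqUder}) with (\ref{eqRefUV})), hence for any polynomially growing pair the cross terms $2UV\Phi,\,2UV\Psi$ are super-exponentially small. A natural candidate is $\Phi(x)=\Psi(x)=\sqrt{x}$: the first component of $\mathcal{M}(\Phi,\Psi)$ equals $\tfrac14 x^{-3/2}+V^2\sqrt{x}-2UV\sqrt{x}$, dominated by the positive self-term $\tfrac14 x^{-3/2}$; the second equals $\tfrac14 x^{-3/2}+U^2\sqrt{x}-2UV\sqrt{x}$, dominated by $U^2\sqrt{x}\sim x^{5/2}$. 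Hence for $L$ sufficiently large $(\Phi,\Psi)$ is positive on $\overline{T_L^+}$, diverges at $+\infty$, and satisfies $\mathcal{M}(\Phi,\Psi)>0$ componentwise.

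\textbf{Sweeping.} For $t\ge 0$ set $\mathcal{F}_t:=(\bar\phi+t\Phi,\bar\psi+t\Psi)$; linearity gives $\mathcal{M}(\mathcal{F}_t)=t\,\mathcal{M}(\Phi,\Psi)\ge 0$. Because $(\bar\phi,\bar\psi)$ is bounded and $(\Phi,\Psi)$ diverges, $\mathcal{F}_t>0$ in $\overline{T_L^+}$ for $t$ large. Let
\[
t_0:=\inf\{t\ge 0:\ \mathcal{F}_t\ge 0\ \text{in}\ \overline{T_L^+}\}.
\]
I aim to prove $t_0=0$; once this holds $\bar\phi,\bar\psi\ge 0$ in $T_L^+$, and the cooperative strong maximum principle applied to each scalar inequality $-\Delta\bar\phi+V^2\bar\phi=2UV\bar\psi\ge 0$ (analogously for $\bar\psi$) lifts this to the strict inequality required.

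\textbf{Ruling out $t_0>0$ (the hard part).} Suppose $t_0>0$. Continuity of the infimum in $t$ (which is elementary since minimizing sequences cannot escape to $x=+\infty$ once $\Phi$ diverges) forces one component of $\mathcal{F}_{t_0}$, say the first, to have infimum zero, so pick $(x_n,y_n)\in\overline{T_L^+}$ with $(\bar\phi+t_0\Phi)(x_n,y_n)\to 0$. Boundedness of $\bar\phi$ and divergence of $\Phi$ keep $x_n$ bounded, and the strict positivity of $\bar\phi$ on $\{x=L\}$ keeps $x_n$ away from $L$; after extraction, $x_n\to x^*\in(L,\infty)$. If $\{y_n\}$ is bounded the minimum is realized at an interior point $(x^*,y^*)\in T_L^+$. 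If $|y_n|\to\infty$, I exploit the $y$-translation invariance of $\mathcal{M}$: the translates $(\bar\phi,\bar\psi)(\cdot,\cdot+y_n)$ solve the same system, are uniformly bounded, and by interior and boundary Schauder estimates converge along a subsequence in $C^2_{loc}(\overline{T_L^+})$ to a pair $(\bar\phi_\infty,\bar\psi_\infty)$ that still solves $\mathcal{M}=0$, satisfies $\bar\phi_\infty\ge 0$ on $\{x=L\}$, and realizes $(\bar\phi_\infty+t_0\Phi)(x^*,0)=0$. In either scenario, cooperativity together with $\mathcal{F}_{t_0}\ge 0$ gives
\[
-\Delta(\bar\phi_\infty+t_0\Phi)+V^2(\bar\phi_\infty+t_0\Phi)=2UV(\bar\psi_\infty+t_0\Psi)+t_0\left(-\Phi''+V^2\Phi-2UV\Psi\right)\ge 0,
\]
so $\bar\phi_\infty+t_0\Phi$ is a non-negative function attaining its minimum $0$ at an interior point of the connected set $T_L^+$; the scalar strong minimum principle for $-\Delta+V^2$ forces $\bar\phi_\infty+t_0\Phi\equiv 0$. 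Evaluating at $\{x=L\}$ yields $\bar\phi_\infty(L,\cdot)=-t_0\Phi(L)<0$, contradicting $\bar\phi_\infty\ge 0$ there. The only genuine obstacle in the whole proof is precisely this step: a priori the infimum could escape to $|y|=\infty$, and the novelty consists in using the $y$-translation invariance of $\mathcal{M}$ to route an errant touching point back to a genuine interior one, where the scalar strong minimum principle can be brought to bear.
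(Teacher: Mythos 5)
Your argument is correct and follows essentially the same route as the paper: a positive super-solution of $\mathcal{M}$ depending only on $x$ and diverging as $x\to+\infty$, Serrin's sweeping, $y$-translation compactness to capture a touching point at $|y|=\infty$, and the scalar strong maximum principle to conclude. The one genuine difference is your super-solution: the paper takes $(Z,Z)$ with $Z$ solving $-Z''-2UVZ=0$ and growing linearly (existence via an asymptotic ODE theorem), whereas your explicit $(\sqrt{x},\sqrt{x})$ is verified by a direct computation using only the super-exponential decay of $UV$ and is arguably more elementary; both yield strict super-solutions, so either works. Two small points to tighten: (i) the claim that strict positivity of $\bar\phi$ on $\{x=L\}$ ``keeps $x_n$ away from $L$'' is not justified as stated (interior points arbitrarily close to the boundary could nearly realize the infimum even though $(\bar\phi+t_0\Phi)|_{x=L}\ge t_0\sqrt{L}>0$); the clean fix, which the paper uses, is to allow $x^*=L$ in the limit and rule it out directly, since $(\bar\phi_\infty+t_0\Phi)(L,0)=0$ would force $\bar\phi_\infty(L,0)=-t_0\sqrt{L}<0$, contradicting $\bar\phi_\infty\ge 0$ on $\{x=L\}$; (ii) the ``continuity of the infimum'' step deserves one more line, namely that for $t\in[t_0/2,t_0]$ any negativity of $\mathcal{F}_t$ is confined to a bounded range of $x$ (where $\Phi$ is bounded), so a uniform positive lower bound on $\mathcal{F}_{t_0}$ there would permit decreasing $t$ below $t_0$. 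Neither issue affects the validity of the proof.
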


Clearly, by virtue of (\ref{eqRefUV}), it will be enough to show this for the  $+$ case only.
As may be expected, the proof will rely on the construction of a suitable positive super-solution. The latter will be provided by the pair $\left(Z(x),Z(x)\right)$, where $Z$ is as in the following lemma.

\begin{lem}\label{lemZ}
There exists a large constant $L>0$ and a smooth $Z$ such that
\begin{equation}\label{eqHom}
-Z''-2UVZ=0,\ \ Z>0\ \ \textrm{for}\ \ x>L,
\end{equation}
and
\begin{equation}\label{eqInftyZ}
\lim_{x\to +\infty}Z(x)=+\infty.
\end{equation}
\end{lem}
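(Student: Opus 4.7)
The natural motivation for the ODE prescribed in the lemma is the computation
\[
\mathcal{M}(Z,Z) \;=\; \bigl(-Z''+V^2Z-2UVZ,\ -Z''+U^2Z-2UVZ\bigr) \;=\; \bigl(V^2 Z,\ U^2 Z\bigr),
\]
valid precisely when $-Z''-2UVZ=0$. Thus whenever $Z>0$, the pair $(Z,Z)$ is a componentwise-positive super-solution of $\mathcal{M}=0$, which is what the sweeping argument outlined in the introduction requires; the divergence $Z\to+\infty$ will guarantee that this super-solution eventually dominates any bounded solution of $\mathcal{M}=0$.

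My plan is to construct $Z$ via the initial value problem
\[
Z''(x)+2U(x)V(x)\,Z(x)=0,\qquad Z(L)=1,\qquad Z'(L)=1,
\]
on $[L,\infty)$, with $L$ to be chosen sufficiently large. The key input is the asymptotic behavior at $+\infty$: by (\ref{eqUder}) we have $U(x)\sim x$, while the equation $-V''+U^2V=0$ against the quadratic-growth potential $U^2$ forces $V$ to decay super-exponentially (this is consistent with the convergence rates already invoked in the introduction). Consequently,
\[
I(L) \;:=\; \int_L^{\infty} 2U(t)V(t)\,(1+t-L)\,dt \;\longrightarrow\; 0 \quad \text{as } L\to +\infty,
\]
and I will fix $L$ large enough that $I(L)<\tfrac{1}{2}$.

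The heart of the proof is a continuation argument. Let $X^{\ast}:=\sup\{X\geq L:Z>0\text{ on }[L,X]\}$. On $[L,X^{\ast})$, nonnegativity of $2UV$ gives concavity $Z''\leq 0$, whence $Z'(x)\leq Z'(L)=1$ and consequently $Z(t)\leq 1+(t-L)$. If $X^{\ast}<\infty$, then $Z(X^{\ast})=0$ together with concavity would force $Z'(X^{\ast})\leq 0$; but
\[
Z'(X^{\ast}) \;=\; 1 - \int_L^{X^{\ast}} 2UV(t)\,Z(t)\,dt \;\geq\; 1 - I(L) \;>\; \tfrac{1}{2},
\]
a contradiction. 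Hence $Z>0$ on all of $[L,\infty)$. Re-running the integral estimate globally yields $Z'(x)\geq 1/2$ for every $x\geq L$, and integration gives $Z(x)\geq 1+(x-L)/2\to +\infty$, completing the lemma.

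The main obstacle, in my view, is not the continuation argument itself (which is largely bookkeeping) but rather cleanly justifying the super-exponential decay of $V$ that drives $I(L)\to 0$. Since this asymptotic behavior is invoked repeatedly in the paper's introduction, I would either cite the relevant references (e.g.\ \cite{berest1,aftalionSourdis}) or, if needed, include a short self-contained barrier argument for the $V$-equation exploiting the quadratic lower bound on $U^2(x)$ for $x$ large.
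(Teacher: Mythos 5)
Your construction is correct, and it rests on exactly the same mechanism as the paper's proof: the integrability of $t\,U(t)V(t)$ near $+\infty$, which follows from the super-exponential decay of $UV$. The difference is one of packaging. The paper disposes of the lemma in one line by invoking the classical asymptotic-integration theorem for $-Z''=q(x)Z$ with $\int^{\infty} t\,|q(t)|\,dt<\infty$ (citing \cite[Thm. 5.5.1]{sanchez}), which directly supplies a solution growing linearly at $+\infty$. You instead prove the special case you need from scratch: the initial data $Z(L)=Z'(L)=1$, the concavity bound $Z\le 1+(x-L)$ on the maximal interval of positivity, and the resulting lower bound $Z'\ge 1-I(L)>\tfrac12$ give a clean continuation argument yielding $Z>0$ and $Z\ge 1+(x-L)/2$ on all of $[L,\infty)$. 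The bookkeeping is sound, including the decisive observation that $Z(X^{\ast})=0$ together with concavity would force $Z'(X^{\ast})\le 0$, contradicting the integral identity. What your version buys is self-containedness (no appeal to an external ODE theorem) at the cost of a paragraph; what it still shares with the paper is the single genuine input, namely the super-exponential decay of $V$ (hence of $UV$) as $x\to+\infty$, for which citing \cite{berest1,berest2} is appropriate and is precisely what the paper does implicitly via the remark following (\ref{eqRefUV}).
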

\begin{proof}
This is a direct consequence of the super-exponential decay to zero of $UV$ as $x\to +\infty$ (keep in mind the comment following (\ref{eqRefUV})). Indeed, this implies that the second order linear ODE in (\ref{eqHom}) admits a solution that diverges  linearly as $x\to +\infty$ (see \cite[Thm. 5.5.1]{sanchez}).
\end{proof}

We can now proceed to the proof of Proposition \ref{proMaxInf}.

\begin{proof}
 As we remarked, it will be enough to consider only the $+$ case. Actually, for notational simplicity, we will drop all the $+$ indexes.

We will adapt Serrin's sweeping principle.
We let
\[
\Theta=\left\{\lambda\geq 0\ :\ \theta Z\geq \phi \ \textrm{and} \ \theta Z\geq \psi \ \textrm{in}\  T_L \ \textrm{for every}\ \theta \geq \lambda \right\},
\]
where $L,\ Z(\cdot)$ are as in Lemma \ref{lemZ}.
Our purpose is to show that $\Theta=[0,\infty)$, which will imply that $\phi,\psi \leq 0$. Then, a simple application of the strong maximum principle in each equation of the system will yield at once that $\phi,\ \psi<0$, as desired. We will show that $\Theta=[0,\infty)$ in the remainder of the proof.

We first observe that $\Theta \neq \emptyset$, that is $\Theta=[\tilde{\lambda},\infty)$ for some $\tilde{\lambda}\geq 0$.
This follows plainly from the assumption that $\phi,\psi$ are bounded in $T_L$, while $Z$ is bounded from below by a positive constant in the same region.

To conclude, we will show that $\tilde{\lambda}=0$. To this end, let us suppose to the contrary that $\tilde{\lambda}>0$.
Then, there would exist sequences $\lambda_n<\tilde{\lambda}$ with $\lambda_n \to \tilde{\lambda}$, $x_n>L$ and $y_n\in \mathbb{R}^{N-1}$ such that
\[
\lambda_n Z(x_n)<\phi(x_n,y_n)\ \ \textrm{or}\ \ \lambda_n Z(x_n)<\psi(x_n,y_n),\ \ n\geq 1.
\]
By passing to a subsequence, if necessary, we may assume that the first inequality in the above relation holds for all $n\geq 1$. Now, thanks to (\ref{eqInftyZ}), the sequence $\{x_n\}$ is bounded and, passing to a further subsequence if needed, we may assume that $x_n \to x_\infty \in [L,\infty)$.

We
consider the sequence of translations:
\[
\Phi_n(x,y)=\phi(x,y+y_n),\ \ \Psi_n(x,y)=\psi(x,y+y_n),\ \ (x,y)\in T_L,\ \ n\geq 1.
\]
The pairs $(\Phi_n,\Psi_n)$ clearly still satisfy (\ref{eqMBVP})-(\ref{eqNega}), are  bounded in $T_L$ uniformly with respect to $n$, and it holds $\lambda_n Z(x_n)<\Phi_n(x_n,0).$  Making use of standard elliptic estimates and a usual compactness-diagonal argument, passing to a further subsequence if necessary, we find that $(\Phi_n,\Psi_n)\to (\Phi_\infty,\Psi_\infty)$ in $C^2_{loc}(\bar{T}_L)$.  The limit $(\Phi_\infty,\Psi_\infty)$ is bounded, continues to satisfy (\ref{eqMBVP}), while
\begin{equation}\label{eqContra0}
\Phi_\infty,\ \Psi_\infty \leq 0 \ \ \textrm{on}\ \ x= L,
\end{equation}
 and
\begin{equation}\label{eqContra1}
\tilde{\lambda} Z(x_\infty)\leq\Phi_\infty(x_\infty,0).
\end{equation}

Since $\tilde{\lambda}\in \Theta$, by definition, we have $\tilde{\lambda} Z\geq \phi$  and $\tilde{\lambda} Z\geq \psi$  in $T_L$, i.e.,
$\tilde{\lambda} Z  \geq \Phi_n$ and  $\tilde{\lambda} Z  \geq \Psi_n$ in $T_L$. In turn, letting $n\to \infty$, we obtain that
\begin{equation}\label{eqContra2}
\tilde{\lambda} Z  \geq \Phi_\infty \ \ \textrm{and}\ \ \tilde{\lambda} Z  \geq \Psi_\infty \ \ \textrm{in}\ \ T_L.
\end{equation}

Recalling (\ref{eqHom}), we note that
\[
-\Delta (\Phi_\infty-\tilde{\lambda}Z)+V^2(\Phi_\infty-\tilde{\lambda}Z)=2UV(\Psi_\infty-\tilde{\lambda}Z)-\tilde{\lambda}V^2 Z<0\ \ \textrm{in} \ T_L.
\]
Hence, in light of (\ref{eqContra1}) and (\ref{eqContra2}), we deduce by the strong maximum principle   that
either $\Phi_\infty \equiv \tilde{\lambda}Z$ or $x_\infty=L$ and $\Phi_\infty (L,0) = \tilde{\lambda}Z(L)$.
The first scenario is easily excluded because $Z$ is unbounded whereas $\Phi_\infty$ is bounded.
On the other hand,  the second scenario is excluded from (\ref{eqContra0}) and the fact that $Z(L)>0$.
We have thus arrived at a contradiction, which completes the proof of the proposition
  \end{proof}

\begin{rem}
If the strict inequality in (\ref{eqNega}) is relaxed to less or equal, then either the same assertion continues to hold or both components are identically zero in $T_L$. This follows readily by applying Hopf's boundary point lemma in the last step of the above proof.
\end{rem}

\section{Proof of the main result}\label{secmain}
In this section we will prove our main result Theorem \ref{thm}. To this end, we will need the following lemma.

\begin{lem}\label{lemUnif}
If $P,\ Q$ are as in Theorem \ref{thm}, then it holds
\[
\nabla P(x,y)\to 0\ \textrm{and} \ \nabla Q(x,y)\to 0 \ \textrm{as}\ |x|\to \infty, \textrm{uniformly in}\ y\in \mathbb{R}^{N-1},
\]
where $\nabla$ applies to both $x$ and $y$.
\end{lem}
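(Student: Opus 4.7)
My plan is to combine a barrier argument, in the spirit of Section~\ref{secmax}, with a compactness / Liouville argument; I treat $x \to +\infty$, as the case $x \to -\infty$ follows by the reflection symmetry (\ref{eqRefUV}). The first step is to show that $Q(x,y) \to 0$ uniformly in $y$ as $x \to +\infty$, with an exponential rate. Writing the second equation of (\ref{eqLin-}) as $-\Delta Q + U^2 Q = -2UVP$, the right-hand side is bounded in absolute value by $2\|P\|_{L^\infty}|UV|$ and decays super-exponentially as $x \to +\infty$, while $U^2 \to +\infty$. A one-dimensional barrier $\bar Z(x) = K e^{-\alpha x}$ with small $\alpha>0$ satisfies $-\bar Z'' + U^2 \bar Z \ge 2|UVP|$ on $T_L^+$ once $L$ is large, and by taking $K$ large it dominates $|Q|$ on $\{x=L\}$. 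Applying the scalar maximum principle on $T_L^+$ to $Q \mp \bar Z$ yields $|Q| \le \bar Z$ on $T_L^+$. Symmetrically, $|P| \le K e^{\alpha x}$ on $T_L^-$.

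To deduce the gradient decay, I would argue by contradiction. If the conclusion failed as $x \to +\infty$, there would exist $\epsilon>0$ and sequences $x_n \to +\infty$, $y_n \in \mathbb{R}^{N-1}$ with $|\nabla P(x_n,y_n)| + |\nabla Q(x_n,y_n)| \ge \epsilon$. The translates $P_n(x,y) := P(x+x_n,y+y_n)$ and $Q_n(x,y) := Q(x+x_n,y+y_n)$ remain uniformly bounded in $L^\infty$ and solve the shifted linearized system. By the first step, $U^2(\cdot+x_n)Q_n$ is uniformly bounded on every fixed ball, while $V^2(\cdot+x_n)$ and $UV(\cdot+x_n)$ tend to zero locally uniformly; hence $\Delta P_n \to 0$ locally uniformly and $\Delta Q_n$ remains locally uniformly bounded. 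Interior $W^{2,p}$ estimates and the Sobolev embedding give uniform $C^{1,\beta}_{loc}$ bounds, so along a subsequence $(P_n,Q_n) \to (P_\infty,Q_\infty)$ in $C^{1,\beta}_{loc}(\mathbb{R}^N)$. The first step forces $Q_\infty \equiv 0$, while passing to the limit in the equation for $P_n$ gives $-\Delta P_\infty = 0$ in $\mathbb{R}^N$; since $P_\infty$ is bounded, Liouville's theorem makes it constant. Therefore $\nabla P_\infty \equiv 0 \equiv \nabla Q_\infty$, contradicting $|\nabla P_n(0,0)| + |\nabla Q_n(0,0)| \ge \epsilon$.

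The main obstacle I anticipate is the justification of the scalar maximum principle for $-\Delta + U^2$ on the unbounded half-space $T_L^+$: a bounded subsolution with non-positive boundary data on $\{x=L\}$ need not automatically be non-positive, since its supremum might only be approached as $|y| \to \infty$. I would resolve this in the spirit of the proof of Proposition~\ref{proMaxInf}, by perturbing the comparison with $\epsilon e^{\gamma x}$ for small $\gamma>0$ (so that $(-\partial_x^2 + U^2) e^{\gamma x} = (U^2-\gamma^2)e^{\gamma x} \ge 0$ in $T_L^+$, once $\gamma^2 \le \inf_{T_L^+} U^2$), which drives the perturbed quantity to $-\infty$ as $x \to +\infty$, and then exploiting translation invariance in $y$ together with $C^2_{loc}$ compactness to realize the supremum at a finite interior point, where the strong maximum principle yields a contradiction. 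Once the exponential decay of $Q$ is in hand, the rest of the argument is essentially routine elliptic regularity plus Liouville.
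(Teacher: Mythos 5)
Your proposal is correct and follows essentially the same route as the paper: an exponential barrier for the component facing a coercive potential ($Q$ as $x\to+\infty$, $P$ as $x\to-\infty$), followed by translation, local compactness and Liouville's theorem for the remaining directions. The only differences are cosmetic --- the paper obtains the gradient bound $|\nabla Q|\le Ce^{-x}$ for $x\geq 0$ directly from the barrier via interior elliptic estimates rather than through the Liouville limit, and your extra care in justifying the maximum principle for $-\Delta+U^2$ on the unbounded half-space addresses a point the paper leaves implicit.
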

\begin{proof}
Since $P,Q$ are bounded, recalling that $U\to +\infty$ in a linear fashion as $x\to +\infty$, $V\to +\infty$ in a linear fashion as $x\to -\infty$, and $UV\to 0$ as $|x|\to +\infty$ super-exponentially fast, by a barrier argument and standard elliptic regularity estimates, we deduce that
\begin{equation}\label{eqPQ}\begin{array}{c}
                  \left|P(x,y) \right|+\left|\nabla P(x,y) \right|\leq Ce^x,\ \ x\leq 0, \ y\in \mathbb{R}^{N-1},\\
                   \\
                  \left|Q(x,y) \right|+\left|\nabla Q(x,y) \right|\leq Ce^{-x},\ \ x\geq 0,\ y\in \mathbb{R}^{N-1},
                \end{array}
\end{equation}
for some constant $C>0$ (see also \cite[Prop. 4.3]{delToda}).

For the remaining directions, we will work as follows. Actually, we will present the argument only for $P$ as that for $Q$ is identical.
Let us suppose, to the contrary, that there exists a constant $c>0$ and a sequence $(x_n,y_n)\in \mathbb{R}\times \mathbb{R}^{N-1}$ such that
$x_n\to +\infty$ and $\left|\nabla P(x_n,y_n) \right|\geq c$.

We consider the translations
\[
P_n(x,y)=P(x+x_n,y+y_n),\ \ Q_n(x,y)=Q(x+x_n,y+y_n).
\]
These satisfy
\begin{equation}\label{eqLin}
\left\{\begin{array}{c}
  -\Delta P_n+V^2(x+x_n)P_n+2UV(x+x_n)Q_n=0, \\

  -\Delta Q_n+U^2(x+x_n)Q_n+2UV(x+x_n)P_n=0,
\end{array}\right.\ \  (x,y)\in (\mathbb{R},\mathbb{R}^{N-1}),\end{equation}
and
\[
\left|\nabla P_n(0,0) \right|\geq c.
\]
Since $P_n,Q_n$ are bounded uniformly with respect to $n$, and $V^2(\cdot+x_n)\to 0,$ $UV(\cdot+x_n)\to 0$ in $C_{loc}(\mathbb{R}^N)$,
by standard elliptic estimates and a usual diagonal-compactness argument, passing to a subsequence if needed, we find that $P_n\to P_\infty$  in $C^1_{loc}(\mathbb{R}^N)$. The limit $P_\infty$ is bounded and harmonic in $\mathbb{R}^N$, while \[
\left|\nabla P_\infty(0,0) \right|\geq c>0.
\]
This, however, contradicts the Liouville theorem.
\end{proof}

We are now ready for the proof of Theorem \ref{thm}.

\begin{proof}
For an arbitrary $i\in \{1,\cdots,N-1\}$, let
\[
\Phi=\partial_{y_i}P(x,y_1,\cdots,y_{N-1}),\ \ \Psi=-\partial_{y_i}Q(x,y_1,\cdots,y_{N-1}).
\]
By the assumption that $P,Q$ are bounded, the exponential decay estimates in (\ref{eqPQ}), and standard elliptic estimates, we deduce that the pair $(\Phi,\Psi)$ is  bounded. Moreover, it clearly satisfies
\[
\mathcal{M}\left(
\begin{array}{c}
  \Phi \\
  \Psi
\end{array}
\right)=
\left(\begin{array}{c}
  0  \\

  0
\end{array}\right),\ \   x\in \mathbb{R},\ \ y\in \mathbb{R}^{N-1}.
\]
 In the sequel, we will use some common notation with the proof of Proposition  \ref{proMaxInf} but hope that no confusion is caused.

 We let
\[
\Theta=\left\{\lambda\geq 0\ :\ \theta U'\geq \Phi \ \textrm{and} \ -\theta V'\geq \Psi \ \textrm{in}\  \mathbb{R}^N \ \textrm{for every}\ \theta \geq \lambda \right\}.
\]
Our purpose is to show that $\Theta=[0,\infty)$, which will imply that $\Phi,\Psi \leq 0$. Then, by same argument applied to the solution $(-\Phi,-\Psi)$ we will  obtain
 that $\Phi, \Psi\equiv 0$, which is the first assertion of the theorem. The other assertion will then follow at once from the one-dimensional non-degeneracy result of \cite{berest1} that we mentioned after (\ref{eqlin1d}). The task of showing that $\Theta=[0,\infty)$ will take up the rest of the proof.

We first observe that $\Theta \neq \emptyset$, that is $\Theta=[\tilde{\lambda},\infty)$ for some $\tilde{\lambda}\geq 0$.
Indeed, since $\Phi,\Psi$ are bounded and $U',-V'$ are positive, there exists a sufficiently large $\bar{\lambda}>0$ such that
\[
\bar{\lambda} U'> \Phi \ \textrm{and} \ -\bar{\lambda} V'> \Psi
\ \textrm{on}\  \bar{S}_L,
\]
where $S_L=\left\{(x,y)\ :\ x\in (-L, L),\ y\in \mathbb{R}^{N-1} \right\}$ with $L>0$ as in Proposition \ref{proMaxInf}.
 Then, Proposition \ref{proMaxInf} (applied with $\phi_\pm=\Phi-\bar{\lambda} U'$ and $\psi_\pm=\Psi+\bar{\lambda} V'$) yields that the above strict ordering continues to hold outside of the strip $S_L$. In other words, we have shown that $\bar{\lambda}\in \Theta$.

To establish that $\tilde{\lambda}=0$, we will argue by contradiction and suppose that $\tilde{\lambda}>0$.
In order to show that the latter is absurd, taking again into account Proposition \ref{proMaxInf}, it suffices to show that there exists a small $\delta>0$ such that
\begin{equation}\label{eqclaim}
(\tilde{\lambda}-\delta) U'> \Phi \ \textrm{and} \ -(\tilde{\lambda}-\delta) V'> \Psi
\ \textrm{on}\  \bar{S}_L.
\end{equation}
If not, we may assume that there  exist  sequences $\lambda_n<\tilde{\lambda}$ with $\lambda_n \to \tilde{\lambda}$, $x_n\in [-L,L]$ with $x_n\to x_\infty$ and $y_n \in \mathbb{R}^{N-1}$ such that
\[
\Phi(x_n,y_n)\geq \lambda_n U'(x_n).
\]
As in the proof of Proposition \ref{proMaxInf}, we consider the translations
\[
\Phi_n(x,y)=\Phi(x,y+y_n),\ \ \Psi_n(x,y)=\Psi(x,y+y_n),\ \ (x,y)\in \mathbb{R}^N.
\]
Up to a subsequence, we find that $(\Phi_n,\Psi_n)\to (\Phi_\infty,\Psi_\infty)$ in $C^2_{loc}(\mathbb{R}^N)$, where  $(\Phi_\infty,\Psi_\infty)$, is bounded and  satisfies the following:
\begin{equation}\label{eqPhi}
\mathcal{M}\left(
\begin{array}{c}
  \Phi_\infty \\
  \Psi_\infty
\end{array}
\right)=
\left(\begin{array}{c}
  0  \\

  0
\end{array}\right),\ \   x\in \mathbb{R},\ \ y\in \mathbb{R}^{N-1},
\end{equation}
\[
\tilde{\lambda} U'\geq \Phi_\infty \ \textrm{and} \ -\tilde{\lambda} V'\geq \Psi_\infty \ \textrm{in}\  \mathbb{R}^N,
\]
\[
\Phi_\infty(x_\infty,0)= \tilde{\lambda} U'(x_\infty).
\]
Since
\[
-\Delta(\Phi_\infty- \tilde{\lambda} U')+V^2(\Phi_\infty- \tilde{\lambda} U')=2UV(\Psi_\infty+ \tilde{\lambda} V')\leq 0\ \ \textrm{in}\ \ \mathbb{R}^N,
\]
we infer by the strong maximum principle that $\Phi_\infty \equiv \tilde{\lambda} U'$. However, recalling (\ref{eqUder}), this is in contradiction with Lemma \ref{lemUnif} which implies that $\Phi_\infty \to 0$ as $|x|\to \infty$.
\end{proof}

\section*{Acknowledgments} The author  would like to express his  thanks to Prof. Terracini for asking him this interesting question while a research fellow at the University of Turin. Part of the paper was written while the author was a visitor in  the University of Ioannina.

\end{document}